\newtheorem{theorem}{Theorem}[section]
\newtheorem{definition}{Definition}[section]
\newtheorem{problem}{Problem}[section]
\newtheorem{lemma}{Lemma}[section]
\newtheorem{remark}{Remark}[section]
\begin{document}

\title{Uniqueness in inverse acoustic scattering with phaseless near-field measurements}

\author{
Deyue Zhang\thanks{School of Mathematics, Jilin University, Changchun, China. {\it dyzhang@jlu.edu.cn}}, 
Fenglin Sun\thanks{School of Mathematics, Jilin University, Changchun, China. {\it sunfl18@mails.jlu.edu.cn}}, 
Yukun Guo\thanks{School of Mathematics, Harbin Institute of Technology, Harbin, China. {\it ykguo@hit.edu.cn} (Corresponding author)}\ \ and 
Hongyu Liu\thanks{Department of Mathematics, Hong Kong Baptist University, Hong Kong, China. {\it hongyuliu@hkbu.edu.hk}}
}
\date{}

\maketitle


\begin{abstract}
This paper is devoted to the uniqueness of inverse acoustic scattering problems with the modulus of near-field data. By utilizing the superpositions of point sources as the incident waves, we rigorously prove that the phaseless near-fields collected on an admissible surface can uniquely determine the location and shape of the obstacle as well as its boundary condition and the refractive index of a medium inclusion, respectively. We also establish the uniqueness in determining a locally rough surface from the phaseless near-field data due to superpositions of point sources. These are novel uniqueness results in inverse scattering with phaseless near-field data.
\end{abstract}

\noindent{\it Keywords}: uniqueness, phaseless, inverse scattering, near field, point source, acoustic wave


\section{Introduction}

Inverse scattering theory is concerned with the determination of underlying target scatterer from the incident wave and the measured near-field or far-field data. In particular, the inverse scattering theory of time-harmonic waves is of great importance in various applications such as radar detection, sonar inspection, nondestructive testing and modern medical diagnostics. The time-harmonic inverse scattering problems are typically based on complex-valued data. Hence, in terms of the accessibility to the corresponding phase information, the measured data in inverse scattering problems can be classified into two types: phased/full data and phaseless or intensity-only/modulus-only data. Over the past several decades, the inverse scattering problems with full measured data (both phase and intensity) have been mathematically and numerically studied intensively in the literature (see, e.g. \cite{CK13, KG08} and the references therein). Recently, a great deal of effort has been devoted to phaseless inverse scattering problems \cite{ACZ16, BLL13, BZ16, Kli14, Kli17, KR16, KR17}. The motivation for investigating phaseless inverse problems is mainly due to the fact that such phase information is extremely difficult to be measured accurately or even completely unavailable in a rich variety of realistic scenarios. As a result, only the phaseless data can be practically obtained in these cases. 

The inverse scattering problem with one incident plane wave and phaseless far-field data is challenging due to the {\it translation invariance property}, namely, the modulus of the far field pattern is invariant under translations \cite{KR97, LS04}. Specifically speaking, the location of the scatterer cannot be uniquely determined by the phaseless far-field data. Nevertheless, shape recovery from phaseless data is still possible. Actually, quite a number of reversion schemes have been proposed to reconstruct the shape of the scatterer from the modulus-only far-field data with a single incident plane wave, see \cite{Iva07, Iva10, Iva11, Lee16, LL15, LLW17}. We also refer to \cite{CH17, GDM18, ZZ18} for the relevant numerical studies. It was established in \cite{LZ09} that the radius of a small sound-soft ball can be uniquely determined from a single intensity-only far-field datum.

It is often desirable to develop corresponding techniques to tackle the difficulty of translation invariance. An effective attempt in this direction is the superposition of distinct incident plane waves proposed in \cite{ZZ17a}. This idea leads to the multi-frequency Newton iteration algorithm \cite{ZZ17a, ZZ17b} and the fast imaging algorithm at a fixed frequency \cite{ZZ18}. Further, by the superposition of two incident plane waves, uniqueness results were established in \cite{XZZ18a} under some a priori assumptions.

Recently, the reference ball technique was introduced in \cite{ZG18} to break the translation invariance in phaseless inverse acoustic scattering problem. By incorporating an suitably chosen ball into the scattering system as well as the superposition of incident plane wave and point sources, the authors in \cite{ZG18} rigorously prove that the location and shape of the obstacle as well as its boundary condition or the refractive index can be uniquely determined by the modulus of far-field patterns. We would like to point out that the idea of adding a reference ball to the scattering system was first proposed in \cite{LLZ09} to numerically enhance the resolution of the linear sampling method. The reference ball technique was used in \cite{XZZ18b} to alleviate the requirement of the a priori assumptions in \cite{XZZ18a}. Similar strategies of adding reference objects or sources to the scattering system have also been extensively applied to the theoretical analysis and numerical approaches for different models of phaseless inverse scattering problems \cite{DZG19, DLL18, JL18, JLZ18a, JLZ18b, ZGLL18}. In the absence of any additional reference object, the uniqueness can be established by the superposition of incident point sources and phaseless far-field data, see \cite{SZG18}.

In this paper, we will deal with the uniqueness issue concerning the inverse acoustic scattering problems with incident point sources and phaseless near-field data.  In the areas of optics and engineering sciences, the phaseless inverse scattering with near-field data is also known as phase retrieval problem \cite{Maleki, Maleki1}. The inverse scattering problems with phaseless near-field data have been studied numerically (see, e.g. \cite{Candes, Candes1, Caorsi, CH17, CFH17, Pan, Takenaka}), and few studies have been made on the theoretical aspects of uniqueness for the inverse scattering problems. A recent result on uniqueness in \cite{Kli14} was related to the reconstruction of a potential with the phaseless near-field data for point sources on a spherical surface and an interval of wavenumbers, which was extended in \cite{Kli17} to determine the wave speed in generalized 3-D Helmholtz equation. The uniqueness of a coefficient inverse scattering problem with phaseless near-field data has been established in \cite{KR17}. We also refer to \cite{KR16, Novikov15, Novikov16} for some recovery algorithms for the inverse medium scattering problems with phaseless near-field data. The stability analysis for linearized near-field phase retrieval in X-ray phase contrast imaging can be found in \cite{Maretzke}.

In this work, we establish the uniqueness via superposition of incident point sources, which does not rely on any additional reference/interfering scatterer. By introducing the concept of an admissible surface or curve, together with the superposition of point sources, we rigorously proved that the bounded scatterer (impenetrable obstacle or medium inclusion) and the locally perturbed half-plane (a.k.a locally rough surface) could be uniquely determined from the phaseless near-field measurements. For the uniqueness of inverse scattering by locally rough surfaces with phaseless far-field data, we refer to \cite{XZZ18b}. A key feature of this study is that we make use of the limited-aperture phaseless near-field data co-produced by the scatterer and point sources, thus the configuration is practically more feasible than the cases of using the phaseless scattered data.

The rest of this paper is arranged as follows. Section \ref{sec:bounded_scatterer} is devoted to the inverse scattering problem of uniquely determining a bounded scatterer. Then in section \ref{sec:half-plane}, we study the uniqueness results on phaseless inverse scattering by locally perturbed half-planes.

\section{Uniqueness for inverse scattering by bounded scatterers}\label{sec:bounded_scatterer}

\subsection{Problem setting}

We begin this section with the acoustic scattering problems for an incident plane wave. Assume $D \subset\mathbb{R}^3$ is an open and simply-connected domain with $C^2$ boundary $\partial D$.  Denote by $\nu$ the unit outward normal to $\partial D$ and by $\mathbb{S}^2:=\{x\in\mathbb{R}^3: |x|=1\}$ the unit sphere in $\mathbb{R}^3$. Let $u^i(x,d)=\mathrm{e}^{\mathrm{i} k x\cdot d}$ be a given incident plane wave, where $d\in\mathbb{S}^2$ and $k>0$ are the incident direction and wavenumber, respectively.
Then, the obstacle scattering problem can be formulated as: to find the total field $u=u^i+u^s$ which satisfies the following boundary value problem (see \cite{CK13}):
\begin{eqnarray}
\Delta u+ k^2 u  =  0\quad \mathrm{in}\ \mathbb{R}^3\backslash\overline{D},\label{eq:Helmholtz} \\
\mathscr{B}u  =  0 \quad \mathrm{on}\ \partial D, \label{eq:boundary_condition} \\
\lim\limits_{r=|x|\rightarrow\infty} r\bigg(\displaystyle\frac{\partial u^s}{\partial r} -\mathrm{i} ku^s\bigg)=0, \label{eq:Sommerfeld}
\end{eqnarray}
where $u^s$ denotes the scattered field and \eqref{eq:Sommerfeld} is the Sommerfeld radiation condition. Here $\mathscr{B}$ in \eqref{eq:boundary_condition} is the boundary operator
defined by
\begin{eqnarray}\label{BC}
\left\{
\begin{array}{ll}
\mathscr{B}u=u & \text{for a sound-soft obstacle},  \\
\mathscr{B}u=\displaystyle\frac{\partial u}{\partial \nu}+\mathrm{i} k\lambda u & \text{for an impedance obstacle},
\end{array}
\right.
\end{eqnarray}
where $\lambda$ is a real parameter. This boundary condition \eqref{BC} covers the Dirichlet/sound-soft boundary condition, the Neumann/sound-hard boundary condition ($\lambda=0$), and the impedance boundary condition ($\lambda\neq 0$).

The medium scattering problem is to find the total field $u=u^i+u^s$ that fulfills
\begin{eqnarray}
    \Delta u+ k^2n(x) u=  0 \quad \text{in}\ \mathbb{R}^3, \label{eq:Helmholtz_D}\\
    \lim\limits_{r=|x|\to\infty} r\bigg(\displaystyle\frac{\partial u^s}{\partial r}-\mathrm{i} ku^s\bigg)=0, \label{eq:Sommerfeld2}
\end{eqnarray}
where the refractive index $n(x)$ of the inhomogeneous medium is piecewise continuous such that $\mathrm{Re}(n)>0$, $\mathrm{Im}(n)\geq0$ and $1-n(x)$ is supported in $D$.

The direct scattering problems \eqref{eq:Helmholtz}--\eqref{eq:Sommerfeld} and \eqref{eq:Helmholtz_D}--\eqref{eq:Sommerfeld2} admit a unique solution
(see, e.g., \cite{Cakoni, CK13, McLean}), respectively, and the scattered wave $u^s$ has the following asymptotic behavior
$$
u^s(x,d)=\frac{\mathrm{e}^{\mathrm{i} k|x|}}{|x|}\left\{ u^{\infty}(\hat{x},d)+\mathcal{O}\left(\frac{1}{|x|}\right) \right\}, \quad |x|\to\infty
$$
uniformly in all observation directions $\hat{x}=x/|x|\in\mathbb{S}^2$. The analytic function $u^{\infty}(\hat{x},d)$ defined on the unit sphere $\mathbb{S}^2$ is called the far field pattern or scattering amplitude (see \cite{CK13}).

Now, we turn to introducing the inverse acoustic scattering problem for incident point sources with limited-aperture phaseless near-field data. To this end, we first introduce the following definition of admissible surfaces.
\begin{definition}[Admissible surface]\label{def:admissible_surface}
An open surface $\Gamma$ is called an admissible surface with
respect to domain $\Omega$ if

\noindent (i) $\Omega\subset\mathbb{R}^3\backslash\overline{D}$ is bounded and simply-connected;

\noindent (ii) $\partial \Omega$ is analytic homeomorphic to $\mathbb{S}^2$;

\noindent (iii) $k^2$ is not a Dirichlet eigenvalue of $-\Delta$ in $\Omega$;

\noindent (iv) $\Gamma\subset\partial\Omega$ is a two-dimensional analytic manifold with nonvanishing measure.
\end{definition}

\begin{remark}
    We would like to point out that this requirement for the admissibility of $\Gamma$ is quite mild and thus can be easily fulfilled. For instance, $\Omega$ can be chosen as a ball whose radius is less than $\pi/k$ and $\Gamma$ is chosen as an arbitrary corresponding semisphere.
\end{remark}

For a generic point $z\in\mathbb{R}^3\backslash\overline{D}$, the incident field due to the point source located at $z$ is given by
\begin{equation*}
\Phi (x, z):=\frac{\mathrm{e}^{\mathrm{i} k|x-z|}}{4\pi |x-z|}, \quad x\in\mathbb{R}^3\backslash(\overline{D}\cup\{z\}),
\end{equation*}
which is also known as the fundamental solution to the Helmholtz equation. Denote by $v^s_D(x,z)$ and $v^\infty_D(\hat{x},z)$ the near-field and far-field pattern generated by $D$ corresponding to the incident field $\Phi(x, z)$. Define
$$
v(x,z):=v^s_D(x,z)+\Phi(x, z),\quad x\in\mathbb{R}^3\backslash(\overline{D}\cup\{z\})
$$
and
$$
v^{\infty}(\hat{x},z):=v^{\infty}_D(\hat{x},z)+\Phi^\infty(\hat{x}, z),\quad \hat{x}\in\mathbb{S}^2,
$$
where $\Phi^\infty(\hat{x},z):=\mathrm{e}^{-\mathrm{i} k \hat{x}\cdot z}/(4\pi)$ is the the far-field pattern of $\Phi(x, z)$.

For two generic and distinct source points $z_1, z_2\in\mathbb{R}^3\backslash\overline{D}$, we denote by
\begin{equation}\label{incident}
 v^i(x; z_1,z_2):=\Phi(x, z_1)+\Phi(x, z_2),\quad x\in\mathbb{R}^3\backslash(\overline{D}\cup\{z_1\}\cup\{z_2\}),
\end{equation}
the superposition of these point sources. Then, by the linearity of direct scattering problem, the near-field co-produced by $D$ and the incident wave $v^i(x; z_1,z_2)$ is given by
$$
v(x; z_1,z_2):=v(x, z_1)+v(x, z_2), \quad x\in\mathbb{R}^3\backslash(\overline{D}\cup\{z_1\}\cup\{z_2\}).
$$

With these preparations,  we formulate the phaseless inverse scattering problems as the following.

\begin{problem}[Phaseless inverse scattering by a bounded scatterer]\label{prob:obstacle}
Assume that $\Gamma$ and $\Sigma$ are admissible surfaces with respect to $\Omega$ and $G$, respectively, such that
$\overline{\Omega}\cap \overline{G}=\emptyset$. Let $D$ be the impenetrable obstacle with boundary condition $\mathscr{B}$ or the inhomogeneous medium with refractive index $n$. Given the phaseless near-field data
 \begin{equation*}
 \begin{array}{ll}
 & \{|v(x,z_0)|: x\in \Sigma\}, \\
 & \{|v(x,z)|:  x\in \Sigma,\ z\in \Gamma\}, \\
 & \{|v(x,z_0)+v(x,z)|: x\in \Sigma,\ z\in \Gamma\}
 \end{array}
 \end{equation*}
 for a fixed wavenumber $k>0$ and a fixed  $z_0\in\mathbb{R}^3\backslash(\overline{D}\cup\Gamma\cup\Sigma)$, determine the location and shape $\partial D$ as well as the boundary condition
 $\mathscr{B}$ for the obstacle or the refractive index $n$ for the medium inclusion.
\end{problem}

We refer to Figure \ref{fig:illustration} for an illustration of the geometry setting of Problem \ref{prob:obstacle}. The uniqueness of this problem will be analyzed in the next subsection.

\begin{figure}
    \centering
    \newdimen\R 
    \R=0.5cm
    \begin{tikzpicture}[thick]
    \pgfmathsetseed{3}
    \draw plot [smooth cycle, samples=8, domain={1:9},  xshift=0.7cm, yshift=-0.1cm] (\x*360/8+5*rnd:0.5cm+1cm*rnd) [fill=lightgray] node at (0.6,-0.2) {$D$};
    \draw [blue, dashed] (1.6, 3.4) circle (0.6cm); 
    \draw node at (1.6,3.5) {$\Omega$};
    \draw (1.05, 3.15) arc(205:345:0.6cm) [very thick, blue];
    \draw node at (2.3,2.9) {$\Gamma$};

    \fill [red] (-1.5,1.5) circle (2pt);  
    \draw (-1.5, 1.5) node [above] {$z_0$};
    \draw [->] (-1.4, 1.4)--(-0.8, 0.9);
    \draw (-1.6, 1.2) arc(280:360:0.5cm);
    \draw (-1.6, 1.0) arc(280:360:0.7cm);

    \fill [red] (1.6,2.8) circle (2pt);  
    \draw node at (1.7,3.0) {$z$};
    \draw [->] (1.58, 2.7)--(1.5, 2);
    \draw (1.2, 2.6) arc(225:315:0.5cm);
    \draw (1.1, 2.5) arc(220:320:0.6cm);

    \draw [blue, dashed] (3.6, 1.7) circle (0.7cm); 
    \fill [red] (2.96, 1.4) circle (2pt);  
    \draw node at (3.2, 1.5) {$x$};
    \draw (3.02, 2.1) arc(145:265:0.7cm) [very thick, blue];
    \draw node at (3.6, 1.8) {$G$};
    \draw node at (2.8, 2.2) {$\Sigma$};

    \draw (-0.2, 2) node [above] {$v^i(\cdot; z_0, z)$};
    \draw [->] (2.1, 0.8)--(2.8, 1.3);
    \draw (2.5, 0.7) arc(0:70:0.6cm);
    \draw (2.7, 0.7) arc(0:70:0.8cm);

    \draw node at (3.3,0.3) {$|v(\cdot; z_0, z)|$}; 
    \end{tikzpicture}
    \caption{An illustration of the phaseless inverse scattering by a bounded scatterer.} \label{fig:illustration}
\end{figure}
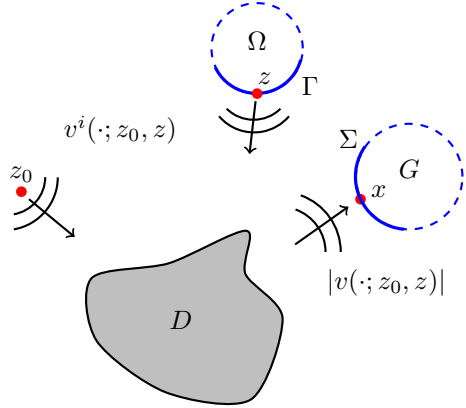

\subsection{Uniqueness result}\label{subsec:obstacle}

Now we present the uniqueness results on phaseless inverse scattering. The following theorem shows that Problem \ref{prob:obstacle} admits a unique solution, namely, the geometrical and physical information of the scatterer boundary or the refractive index for the medium can be simultaneously and uniquely determined from the modulus of near-fields.

\begin{theorem}\label{thm:bounded_scatterer}
Assume that $\Gamma$ and $\Sigma$ are admissible surfaces with respect to $\Omega$ and $G$, respectively, such that $\overline{\Omega}\cap \overline{G}=\emptyset$.  For two scatterers
$D_1$ and $D_2$, suppose that the corresponding near-fields satisfy that
   \begin{align}
   |v_1(x, z_0)|= & |v_2(x, z_0)|, \quad \forall x  \in \Sigma, \label{obstacle_condition1} \\
   |v_1(x, z)|= & |v_2(x, z)|, \quad \forall (x, z) \in \Sigma\times\Gamma \label{obstacle_condition2}
   \end{align}
   and
   \begin{equation}\label{obstacle_condition3}
   |v_1(x, z_0)+v_1(x, z)|=|v_2(x, z_0)+v_2(x, z)|,\quad \forall (x, z) \in \Sigma\times\Gamma
   \end{equation}
   for an arbitrarily fixed $z_0\in\mathbb{R}^3\backslash(\overline{D}\cup\Gamma\cup\Sigma)$. Then we have

\noindent (i) If $D_1$ and $D_2 $ are two impenetrable obstacles with boundary conditions $\mathscr{B}_1$ and $\mathscr{B}_2$ respectively, then $D_1=D_2$ and $\mathscr{B}_1=\mathscr{B}_2$.

\noindent (ii) If $D_1$ and $D_2 $ are two medium inclusions with refractive indices $n_1$ and $n_2$ respectively, then $n_1=n_2$.
\end{theorem}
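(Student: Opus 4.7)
My strategy is to reduce the phaseless problem to the standard phased inverse scattering problem, after which the conclusion follows from classical uniqueness theorems. Expanding the third identity $|v_1(x,z_0)+v_1(x,z)|^2 = |v_2(x,z_0)+v_2(x,z)|^2$ and subtracting \eqref{obstacle_condition1}--\eqref{obstacle_condition2} yields the polarization relation
\begin{equation*}
\mathrm{Re}\bigl(v_1(x,z_0)\overline{v_1(x,z)}\bigr) = \mathrm{Re}\bigl(v_2(x,z_0)\overline{v_2(x,z)}\bigr),\qquad (x,z)\in\Sigma\times\Gamma.
\end{equation*}
The plan is to upgrade this real-part equality to the full identity $v_1(x,z)=v_2(x,z)$ on a sufficiently rich open set.

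Fix $x\in\Sigma$. By the mixed reciprocity $v_j(x,z)=v_j(z,x)$, the map $z\mapsto v_j(x,z)$ is a radiating Helmholtz solution in $z\in\Omega$, and hence (since $k$ is real) so is its complex conjugate. Both sides of the polarization relation are therefore real-valued Helmholtz solutions in $z\in\Omega$ whose difference vanishes on $\Gamma$. Because $\partial\Omega$ is real-analytic and $\Gamma$ has nonvanishing $2$-dimensional measure in $\partial\Omega$, this difference extends real-analytically along $\partial\Omega$ and must vanish on all of $\partial\Omega$; the assumption that $k^2$ is not a Dirichlet eigenvalue of $-\Delta$ in $\Omega$ then propagates the identity throughout $\Omega$, and analytic continuation in $z$ further promotes it to the connected domain $U_x:=\mathbb{R}^3\setminus(\overline{D_1}\cup\overline{D_2}\cup\{x\})$. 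The extended identity may be rewritten as
\begin{equation*}
\overline{v_1(x,z_0)}\,v_1(x,z)-\overline{v_2(x,z_0)}\,v_2(x,z) = v_2(x,z_0)\,\overline{v_2(x,z)}-v_1(x,z_0)\,\overline{v_1(x,z)},
\end{equation*}
in which the left-hand side is an outgoing (radiating) Helmholtz solution in $z$ while the right-hand side is its incoming counterpart. The two opposite Sommerfeld conditions together imply that both asymptotic coefficients of $e^{\pm ik|z|}/|z|$ must vanish at infinity, so by Rellich's lemma and unique continuation the left-hand side is identically zero on $U_x$. Letting $z\to x$ and matching the $1/(4\pi|x-z|)$ singularity carried by $\Phi(z,x)$ in both $v_j(x,z)$ then forces $v_1(x,z_0)=v_2(x,z_0)$ for every $x\in\Sigma$; dividing back gives $v_1(x,z)=v_2(x,z)$ on $\Sigma\times U_x$ away from the measure-zero zero set of $v_j(\cdot,z_0)$, which is absorbed by real-analyticity.

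The remaining steps are routine. By reciprocity, for each fixed $z$ in the exterior of $\overline{D_1\cup D_2}$ one has $v_1(z,\cdot)=v_2(z,\cdot)$ on $\Sigma$; the admissibility of $(\Sigma,G)$ applied analogously (real-analyticity of $\partial G$ together with the non-Dirichlet-eigenvalue condition in $G$), followed by analytic continuation in $x$, extends this to $v_1(z,x)=v_2(z,x)$ throughout the exterior of $\overline{D_1\cup D_2}$. The problem thereby reduces to the phased inverse problem with a rich family of point-source incidences, and the conclusions $D_1=D_2$ together with $\mathscr{B}_1=\mathscr{B}_2$ in case (i), respectively $n_1=n_2$ in case (ii), then follow from classical uniqueness results (cf.\ \cite{CK13, Cakoni}). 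The step I anticipate as the chief obstacle is the outgoing/incoming separation in the key identity above: this is precisely what converts real-part-only information into genuine phased data, and it depends delicately on the radiation asymptotics supplied by reciprocity together with Rellich's lemma and unique continuation.
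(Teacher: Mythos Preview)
Your proof is correct and reaches the same endpoint as the paper's, but the phase-retrieval step is organized quite differently. The paper writes $v_j(x,z_0)=r\,\mathrm{e}^{\mathrm{i}\alpha_j}$, $v_j(x,z)=s\,\mathrm{e}^{\mathrm{i}\beta_j}$, restricts to an open set where $r,s\neq0$, and from $\cos(\alpha_1-\beta_1)=\cos(\alpha_2-\beta_2)$ splits into two cases: either $v_1=\mathrm{e}^{\mathrm{i}\gamma(x)}v_2$ (then $\gamma\equiv0$ by matching the $\Phi$-singularity) or $v_1=\mathrm{e}^{\mathrm{i}\eta(x)}\overline{v_2}$ (then ruled out by the incompatibility of $\mathrm{e}^{\mathrm{i}k\rho}$ and $\mathrm{e}^{-\mathrm{i}k\rho}$ asymptotics). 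You instead extend the polarization identity in $z$, rewrite it as ``outgoing $=$ incoming'', and conclude both sides vanish; then a single singularity-matching step at $z\to x$ yields $v_1(x,z_0)=v_2(x,z_0)$ and hence $v_1=v_2$. Your outgoing/incoming dichotomy is in fact the same mechanism the paper uses to kill its second case, but you deploy it up front so that no case split and no polar decomposition (with its attendant branch and non-vanishing bookkeeping) is needed---a genuinely cleaner route. The paper's version, in turn, makes the potential obstruction more visible: one sees explicitly that phaseless data leave a unimodular factor $\mathrm{e}^{\mathrm{i}\gamma(x)}$ undetermined until the point-source singularity pins it down. Two minor wording points: what you call ``mixed reciprocity'' is the point-source reciprocity $v_j(x,z)=v_j(z,x)$ (mixed reciprocity is what the paper invokes later to pass to plane-wave far fields), and the conjugate $\overline{v_j(x,z)}$ is a Helmholtz solution but \emph{incoming}, not radiating---your subsequent argument uses exactly this, so the slip is purely verbal.
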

\begin{proof}
    From  \eqref{obstacle_condition1}, \eqref{obstacle_condition2} and \eqref{obstacle_condition3}, we have for all $x\in\Sigma, z\in\Gamma$
    \begin{equation}\label{Thm1equality1}
    \mathrm{Re}\left\{v_1(x,z_0) \overline{v_1(x,z)}\right\}
    =\mathrm{Re}\left\{v_2(x,z_0) \overline{v_2(x,z)}\right\},
    \end{equation}
    where the overline denotes the complex conjugate. According to \eqref{obstacle_condition1} and \eqref{obstacle_condition2}, we denote
    \begin{equation*}
        v_j(x,z_0)=r(x,z_0) \mathrm{e}^{\mathrm{i} \alpha_j(x,z_0)},\quad
        v_j(x,z)=s(x,z) \mathrm{e}^{\mathrm{i} \beta_j(x,z)},\quad j=1,2,
    \end{equation*}
    where $r(x,z_0)=|v_j(x,z_0)|$, $s(x,z)=|v_j(x,z)|$, $\alpha_j(x,z_0)$ and $\beta_j(x,z)$, are real-valued functions,  $j=1,2$.

    Since $\Sigma$ is an admissible surface of $G$, by Definition \ref{def:admissible_surface} and the analyticity of $v_j(x,z)$ with respect to $x$, we have $s(x,z)\not\equiv 0$ for $x \in \Sigma, z \in\Gamma$. Further, the continuity yields that there exists open sets $\tilde{\Sigma}\subset \Sigma$ and $\Gamma_0\subset\Gamma$
    such that $s(x,z)\neq 0$, $\forall (x,z)\in \tilde{\Sigma}\times\Gamma_0$. Similarly, we have $r(x,z_0)\not\equiv 0$ on $\tilde{\Sigma}$. Again,
    the continuity leads to $r(x,z_0)\neq 0$ on an open set $\Sigma_0 \subset \tilde{\Sigma}$. Therefore, we have $r(x,z_0)\neq 0$, $s(x,z)\neq 0,\ \forall (x, z) \in \Sigma_0\times\Gamma_0$. In addition, taking \eqref{Thm1equality1} into account, we derive that
    \begin{equation*}
        \cos[\alpha_1(x,z_0)-\beta_{1}(x,z)]=\cos[\alpha_2(x,z_0)-\beta_{2}(x,z)], \quad \forall (x, z) \in \Sigma_0\times\Gamma_0.
    \end{equation*}
    Hence, either
    \begin{eqnarray}\label{Thm1equality2}
    \alpha_1(x,z_0)-\alpha_2(x,z_0)=\beta_{1}(x,z)-\beta_{2}(x,z)+ 2m\pi, \quad \forall (x, z) \in \Sigma_0\times\Gamma_0
    \end{eqnarray}
    or
    \begin{eqnarray}\label{Thm1equality3}
    \alpha_1(x,z_0)+\alpha_2(x,z_0)=\beta_1(x,z)+\beta_{2}(x,z)+ 2m\pi, \quad \forall (x, z) \in \Sigma_0\times\Gamma_0
    \end{eqnarray}
    holds with some $m \in \mathbb{Z}$.

    First, we shall consider the case \eqref{Thm1equality2}. Since $z_0$ is fixed, let us define $\gamma(x):=\alpha_1(x,z_0)-\alpha_2(x,z_0)- 2m\pi$
    for $ x \in \Sigma_0$, and then, we deduce for all $ (x, z) \in \Sigma_0\times\Gamma_0$
    \begin{equation*}
        v_1(x,z)=s(x,z)\mathrm{e}^{\mathrm{i} \beta_{1}(x,z)}
        =s(x,z)\mathrm{e}^{\mathrm{i} \beta_{2}(x,z)+\mathrm{i} \gamma(x)}=v_2(x,z)\mathrm{e}^{\mathrm{i} \gamma(x)}.
    \end{equation*}
    From the reciprocity relation \cite[Theorem 3]{AMSS02} for point sources, we have
    \begin{equation*}
        v_1(z,x)= \mathrm{e}^{\mathrm{i} \gamma(x)}v_2(z,x), \quad \forall (x, z) \in \Sigma_0\times\Gamma_0.
    \end{equation*}
Then, for every $x\in \Sigma_0$, by using the analyticity of $v_j(z,x)$($j=1,2$) with respect to $z$, we have $v_1(z,x)= \mathrm{e}^{\mathrm{i} \gamma(x)}v_2(z,x),$   $\forall z\in \partial\Omega$.
Let $w(z,x)=v_1(z,x)-\mathrm{e}^{\mathrm{i} \gamma(x)}  v_2(z,x)$, then
\begin{equation*}
\left\{
\begin{array}{lr}
\Delta w+ k^2 w=0 &\mathrm{in}\ \Omega, \\
w=0  & \mathrm{on}\ \partial \Omega.
\end{array}
\right.
\end{equation*}
By the assumption of $\Omega$ that $k^2$ is not a Dirichlet eigenvalue of $-\Delta$ in $\Omega$,  we find $w=0$ in $\Omega$. Now, the analyticity of
$v_j(z,x)(j=1,2)$ with respect to $z$ yields
    \begin{equation*}
        v_1(z,x)= \mathrm{e}^{\mathrm{i} \gamma(x)}v_2(z,x),\quad \forall z \in \mathbb{R}^3\backslash (\overline{D}_1\cup \overline{D}_2\cup \{x\}).
    \end{equation*}
i.e., for all $z \in \mathbb{R}^3\backslash (\overline{D}_1\cup \overline{D}_2\cup \{x\})$,
\begin{equation}\label{eq:relation}
        v_{D_1}^s(z,x)+\Phi(z,x)=\mathrm{e}^{\mathrm{i} \gamma(x)} \left[ v_{D_2}^s(z,x)+\Phi(z,x)\right].
\end{equation}
By the Green's formula \cite[Theorem 2.5]{CK13}, one can readily deduce that $v^s_{D_j}(z,x)(j=1,2)$ are bounded for $z\in \mathbb{R}^3\backslash (\overline{D}_1\cup \overline{D}_2)$,
which, together with \eqref{eq:relation}, implies that $(1-\mathrm{e}^{\mathrm{i} \gamma(x)})\Phi(z,x)$ is bounded for $z \in \mathbb{R}^3\backslash (\overline{D}_1\cup \overline{D}_2\cup \{x\})$. Hence, by letting $z\to x$, we obtain $\mathrm{e}^{\mathrm{i} \gamma(x)} = 1$, and
\begin{equation*}
    v_{D_1}^s(z,x) =  v_{D_2}^s(z,x),\quad   \forall (x, z) \in \Sigma_0\times \mathbb{R}^3\backslash (\overline{D}_1\cup \overline{D}_2\cup \{x\}).
\end{equation*}
And thus, the far-field patterns coincide, i.e.
\begin{equation*}
       v_{D_1}^\infty(\hat{z},x)= v_{D_2}^\infty(\hat{z},x), \quad \forall (x, \hat{z}) \in \Sigma_0\times\mathbb{S}^2.
\end{equation*}
Now, from the mixed reciprocity relation \cite[Theorem 3.16]{CK13} for the obstacle or \cite[Theorem 2.2.4]{Pot01} for the inhomogeneous medium, we have
\begin{equation*}
       u_1^s(x,-\hat{z})= u_2^s(x,-\hat{z}), \quad \forall (x, \hat{z}) \in \Sigma_0\times \mathbb{S}^2.
\end{equation*}
Further, the analyticity of $u_j^s(x, d)$($j=1,2$) with respect to $x$ yields $u_1^s(x,d)= u_2^s(x,d)$, $\forall (x, d) \in \partial G\times \mathbb{S}^2$.
By the similar discussion of \eqref{eq:relation} for $u_1^s(x,d)- u_2^s(x,d)$ on $G$, we have
\begin{equation*}
    u_1^s(x,d) =  u_2^s(x,d),\quad   \forall (x, d) \in \mathbb{R}^3\backslash (\overline{D}_1\cup \overline{D}_2)\times \mathbb{S}^2.
\end{equation*}
Therefore, we obtain
    \begin{eqnarray}\label{coincide}
    u^{\infty}_1(\hat{x},d)= u^{\infty}_2(\hat{x},d),
    \quad \forall \hat{x}, d \in \mathbb{S}^2.
    \end{eqnarray}

Next we are going to show that the case \eqref{Thm1equality3} does not hold. Suppose that \eqref{Thm1equality3} is true, then following a similar argument, we see that there exists $\eta(x)$ such that $v_1(z,x)=\mathrm{e}^{\mathrm{i} \eta(x)} \overline{v_2(z,x)}$ for $x\in \Sigma_0$, $z \in \mathbb{R}^3\backslash (\overline{D}_1\cup \overline{D}_2\cup \{x\})$, i.e.
\begin{equation*}
        v_{D_1}^s(z,x)+\Phi(z,x)=\mathrm{e}^{\mathrm{i} \eta(x)}  \overline{\left[v_{D_2}^s(z,x) +\Phi(z,x)\right]}.
\end{equation*}
Then, from the boundedness of $v_{D_j}^s(z,x)$, it can be seen that $\Phi(z,x)-\mathrm{e}^{\mathrm{i} \eta(x)} \overline{\Phi(z,x)}$ is bounded for all $z \in \mathbb{R}^3\backslash (\overline{D}_1\cup \overline{D}_2\cup \{x\})$. Since
\begin{equation*}
        \Phi(z,x)-\mathrm{e}^{\mathrm{i} \eta(x)} \overline{\Phi(z,x)}
      =  [1-\mathrm{e}^{\mathrm{i} \eta(x)}]\frac{\cos (k|z-x|)}{4\pi|z-x|}\\
      +\mathrm{i}[1+\mathrm{e}^{\mathrm{i} \eta(x)}]\frac{\sin (k|z-x|) }{4\pi|z-x|},
\end{equation*}
by letting $z\to x$, we deduce $\mathrm{e}^{\mathrm{i} \eta(x)}=1$, and thus, $v_1(z,x)= \overline{v_2(z, x)}$ for $z\in\mathbb{R}^3\backslash (\overline{D}_1\cup \overline{D}_2\cup \{x\})$. We claim that $v_1^\infty(\hat{z},x)\not\equiv0$ for
 $\hat{z}\in \mathbb{S}^2$. Otherwise, if $v_1^\infty(\hat{z},x) \equiv0$ for $\hat{z}\in \mathbb{S}^2$, then from Rellich Lemma \cite[Theorem 2.14]{CK13}, we have $v_1(z, x)=0$ for all $z \in \mathbb{R}^3\backslash (\overline{D}_1\cup \{x\})$. Further, from
 $v_1(z, x)=v_{D_1}^s(z, x)+\Phi(z, x)$ and the boundedness of $v_{D_1}^s(z, x)$, we deduce $\Phi(z, x)$ is bounded for all
 $z \in \mathbb{R}^3\backslash (\overline{D}_1\cup \{x\})$, which is a contradiction. Then, the continuity leads to
 $v_1^\infty(\hat{z},x)\neq0$  $\forall\hat{z}\in S$, where $S\subset \mathbb{S}^2$ is an open set.  By taking $\tilde{z}\in S$, $z=\rho\tilde{z}$,
 and using the definition of far-field pattern (see \cite[Theorem 2.6]{CK13}), we obtain
\begin{equation*}
        \lim\limits_{\rho\to\infty} \rho\mathrm{e}^{-\mathrm{i} k\rho} v_1(\rho\tilde{z},x)=v_1^\infty(\tilde{z},x)
\end{equation*}
and
\begin{equation*}
        \lim\limits_{\rho\to\infty} \rho\mathrm{e}^{\mathrm{i} k\rho} \overline{v_2(\rho\tilde{z},x)}=\overline{v_2^\infty(\tilde{z},x)}.
\end{equation*}
 Further, noticing $v_1(\rho\tilde{z},x)= \overline{v_2(\rho\tilde{z},x)}$ and $v_1^\infty(\tilde{z},x)\neq0$, we have
\begin{equation*}
        \lim\limits_{\rho\rightarrow \infty}  \mathrm{e}^{2\mathrm{i} k\rho}  = \frac{\overline{v_2^\infty(\tilde{z},x)}}{v_1^\infty(\tilde{z},x)},
\end{equation*}
which is a contradiction. Hence, the case \eqref{Thm1equality3} does not hold.

Having verified \eqref{coincide}, we shall complete our proof as the consequences of two existing uniqueness results. For the inverse obstacle scattering, by Theorem 5.6 in \cite{CK13}, we have $D_1=D_2$ and $\mathscr{B}_1=\mathscr{B}_2$, and for inverse medium scattering, Theorem 10.5 in \cite{CK13} leads to  $n_1=n_2$.
\end{proof}

\begin{remark}
    We would like to point out that an analogous uniqueness result in two dimensions remains valid after appropriate modifications of the fundamental solution, the radiation condition and the admissible surface. So we omit the 2D details.
\end{remark}

\begin{remark}
   We would like to remark that a similar result on uniqueness can be also obtained by using the superposition of a fixed plane wave and some point sources as the incident fields.
\end{remark}

\section{Uniqueness for inverse scattering by locally perturbed half-planes}\label{sec:half-plane}

\subsection{Problem statement}

We begin this section with the precise formulations of the model scattering problem. Assume that the real-valued function $f\in C^2(\mathbb{R})$ has a compact support. Let $\Gamma=\{x=(x_1, x_2)\in \mathbb{R}^2 |\  x_2=f(x_1), x_1\in \mathbb{R} \}$ be the locally perturbed curve and $D=\{x\in \mathbb{R}^2|\ x_2>f(x_1), x_1\in \mathbb{R} \}$ be the locally perturbed half-plane above curve $\Gamma$. Denote by $\Gamma_c=\{x\in \mathbb{R}^2 |\ x_2=0\}$ and by $\Gamma_p=\Gamma\backslash\Gamma_c$ the local perturbation.  For a generic point $z\in D$, the incident field $u^i$ due to the point source located at $z$ is given by
\begin{equation*}
u^i (x, z):=\frac{\mathrm{i} }{4}H_0^{(1)}(k|x-z|)=\frac{\mathrm{i} }{4}J_0(k|x-z|)-\frac{1 }{4}Y_0(k|x-z|), \, x\in D\backslash\{z\},
\end{equation*}
which is also known as the fundamental solution to the Helmholtz equation with wavenumber $k>0$, where $J_0$ and $Y_0$ are the Bessel functions of the first kind and the second kind of order 0, respectively.  Then, the scattering problem can be formulated as: find the scattered field $u^s$, such that
\begin{align}
\Delta u^s+ k^2 u^s= & 0\quad \mathrm{in}\ D,\label{eq:Helmholtz2} \\
\mathscr{B}_c u= & 0 \quad \mathrm{on}\ \Gamma_c, \label{eq:boundary_condition1}\\
\mathscr{B}_p u= & 0 \quad \mathrm{on}\ \Gamma_p, \label{eq:boundary_condition2}\\
\lim\limits_{r=|x|\to\infty} \sqrt{r}\bigg(\dfrac{\partial u^s}{\partial r} -&\mathrm{i} ku^s\bigg)=0, \label{eq:Sommerfeld3}
\end{align}
where $u=u^i+u^s$ denotes the total field. Here $\mathscr{B}_c$ and $\mathscr{B}_p$ in \eqref{eq:boundary_condition1}-\eqref{eq:boundary_condition2} are the boundary operators defined by
\begin{eqnarray}\label{BCC}
\mathscr{B}_cu=\left\{
\begin{array}{ll}
u, & \text{for a sound-soft perturbed half-plane},  \\
\displaystyle\frac{\partial u}{\partial \nu}, & \text{for a sound-hard perturbed half-plane},
\end{array}
\right.
\end{eqnarray}
and
\begin{eqnarray}\label{BCP}
\mathscr{B}_pu=\left\{
\begin{array}{ll}
u, & \mathrm{on}\ \Gamma_{p,_D},  \\
\displaystyle\frac{\partial u}{\partial \nu}+\lambda u, & \mathrm{on}\ \Gamma_{p,_I},
\end{array}
\right.
\end{eqnarray}
where $\nu$ is the unit normal on $\Gamma$ directed into $D$, $\Gamma_{p,_D}\cup \Gamma_{p,_I}=\Gamma_{p}$, $\Gamma_{p,_D}\cap \Gamma_{p,_I}=\emptyset$,
$\lambda\in C(\Gamma_{p,_I})$ and $\mathrm{Im}\lambda\geq0$. The mixed boundary condition \eqref{BCP} is rather general in the sense that it covers the usual Dirichlet/sound-soft boundary condition ($\Gamma_{p,_I}=\emptyset$), the Neumann/sound-hard boundary condition ($\Gamma_{p,_D}=\emptyset$ and $\lambda=0$), and the impedance boundary condition ($\Gamma_{p,_D}=\emptyset$ and $\lambda\neq 0$).

The existence of a unique solution to the scattering problem \eqref{eq:Helmholtz2}--\eqref{eq:Sommerfeld3} by a sound-soft perturbed half-plane with $\Gamma_{p,_I}=\emptyset$ was established in \cite{ZhangDeyue2008, BL2011} by a variational method or in \cite{ZHW2013} by an integral equation method, and the well-posedness of the problem \eqref{eq:Helmholtz2}--\eqref{eq:Sommerfeld3} by a sound-hard perturbed half-plane with $\Gamma_{p,_D}=\emptyset$ and $\lambda=0$ was studied in \cite{Qu} by the integral equation method. In three-dimension case, we refer to \cite{Willers} for the integral equation method.

The well-posedness of the problem \eqref{eq:Helmholtz2}--\eqref{eq:Sommerfeld3} by a sound-soft perturbed half-plane with $\Gamma_{p,_I}\neq\emptyset$ can be obtained similarly by the variational method as shown in \cite{ZhangDeyue2008, BL2011}, while the existence and uniqueness of solutions to the problem \eqref{eq:Helmholtz2}--\eqref{eq:Sommerfeld3} by a sound-hard perturbed half-plane can be established by the variational method with an even expansion and extension of the solution (an odd expansion and extension of the solution in \cite{ZhangDeyue2008, BL2011}) and a simple modification of the Dirichlet-to-Neumann operator in \cite{ZhangDeyue2008, BL2011} which does not affect the properties.

In the following we are going to consider the inverse scattering problem by the locally perturbed half-plane for incident point sources with limited-aperture phaseless near-field data. Similar to Definition \ref{def:admissible_surface}, we first introduce the description of admissible curves.
\begin{definition}[Admissible curve]\label{def:admissible_curve}
	An open curve $\Lambda$ is called an admissible curve with respect to domain $\Omega$ if
	
	\noindent (i) $\overline{\Omega}\subset D $ is bounded and simply-connected;
	
	\noindent (ii) $\partial \Omega$ is analytic homeomorphic to $\mathbb{S}$;
	
	\noindent (iii) $k^2$ is not a Dirichlet eigenvalue of $-\Delta$ in $\Omega$;
	
	\noindent (iv) $\Lambda\subset\partial\Omega$ is a one-dimensional analytic manifold with nonvanishing measure.
\end{definition}

\begin{remark}
	It can be readily seen that Definition \ref{def:admissible_curve} is the one-dimensional version of Definition \ref{def:admissible_surface}. Thus it is easy to find an admissible pair of $(\Omega, \Lambda)$. For example, $\Omega$ can be chosen as a disk whose radius is less than $2.4048/k$ and $\Lambda$ is chosen as an arbitrary corresponding semicircle.
\end{remark}

Analogous to the arguments in the previous section, for two generic and distinct source points $z_1, z_2\in D$, we denote by
\begin{equation}\label{incident2}
u^i(x; z_1,z_2):=u^i(x, z_1)+u^i(x, z_2),\quad x\in D\backslash(\{z_1\}\cup\{z_2\}),
\end{equation}
the superposition of these point sources. Then, by the linearity of direct scattering problem, the total near-field is given by
$$
u(x; z_1,z_2):=u(x, z_1)+u(x, z_2), \quad x\in D\backslash(\{z_1\}\cup\{z_2\}).
$$

We are now in the position to formulate the phaseless inverse scattering problems under consideration.

\begin{problem}[Phaseless inverse scattering by locally perturbed half-planes]\label{prob:half-plane}
	Let $\Gamma$ be the locally perturbed curve with boundary condition $\mathscr{B}_c$ and $\mathscr{B}_p$. Assume that $\Lambda$ and $\Sigma$ are admissible curves with respect to $\Omega$ and $G$, respectively.  Given the phaseless
	near-field data
	\begin{equation*}
	\begin{array}{ll}
	& \{|u(x,z_0)|: x\in \Sigma\}, \\
	& \{|u(x,z)|:  x\in \Sigma,\ z\in \Lambda\}, \\
	& \{|u(x,z_0)+u(x,z)|: x\in \Sigma,\ z\in \Lambda\},
	\end{array}
	\end{equation*}
	for a fixed wavenumber $k>0$ and a fixed  $z_0\in D\backslash( \Lambda\cup\Sigma)$, determine the locally perturbed curve $\Gamma$ as well as the boundary condition $\mathscr{B}_c$ and $\mathscr{B}_p$.
\end{problem}

For an illustration of the above problem, we refer to  Figure \ref{fig:half-plane}. The next subsection will be devoted to the uniqueness issue of this problem.

\begin{figure}
	\centering
	\newdimen\R 
	\R=0.5cm
	\begin{tikzpicture}[thick]
	
	\draw[-](2.8, -1.3)--(6.3, -1.3)[very thick, blue];
	\draw[-](-3.3, -1.3)--(0.02, -1.3) [very thick, blue];
	\draw node at (4.6,-1.05) {$\Gamma_c$};
	\draw node at (-2.,-1.05) {$\Gamma_c$};
	\draw node at (1.15,-1.05) {$\Gamma_p$};
	\draw node at (1.3, 0.2) {$D$};
	
	\draw [blue, dashed] (-1.3, 1) circle (0.7cm); 
	\draw node at  (-1.3, 1.1){$\Omega$};
	\draw (-1.3, 0.3) arc(270:370:0.7cm) [very thick, blue];
	\draw node at (-1.2,0.05) {$\Lambda$};
	\draw (-0.95, 0.43) node [above] {$z$};
	\fill [red] (-0.8,0.5) circle (2pt);  
	\draw [->] (-0.72, 0.4)--(-0.2, -0.1);
	\draw (-0.8, 0.1) arc(280:360:0.5cm);
	\draw (-0.8, -0.1) arc(280:360:0.7cm);
	
	\fill [red] (1.3,1.8) circle (2pt);  
	\draw node at (1.37,2.1) {$z_0$};
	\draw [->] (1.29, 1.67)--(1.2, 1);
	\draw (0.88, 1.6) arc(225:315:0.5cm);
	\draw (0.8, 1.5) arc(220:320:0.6cm);
	
	\draw [blue, dashed] (3.9, 1.2) circle (0.7cm); 
	\fill [red] (3.23, 1.) circle (2pt);  
	\draw node at (3.5, 1.) {$x$};
	\draw (3.32, 1.6) arc(145:265:0.7cm) [very thick, blue];
	\draw node at (3.9, 1.3) {$G$};
	\draw node at (3.13, 1.77) {$\Sigma$};
	
	\draw (-0.1, 1.5) node [above] {$u^i(\cdot; z_0, z)$};
	\draw [->] (2.55, 0.3)--(3.15, 0.9);
	\draw (2.9, 0.3) arc(20:70:0.6cm);
	\draw (3.1, 0.3) arc(10:80:0.7cm);
	
	\draw node at (4.15, 0.15) {$|u(\cdot; z_0, z)|$}; 
	
	\draw [red, very thick] (0.02, -1.3) to [out=0, in=180] (1, -0.5) to [out=0, in=180] (2.2, -2) to [out=0, in=180] (2.8, -1.3); 
	\end{tikzpicture}
	\caption{An illustration of the phaseless inverse scattering by a locally perturbed half-plane.} \label{fig:half-plane}
\end{figure}
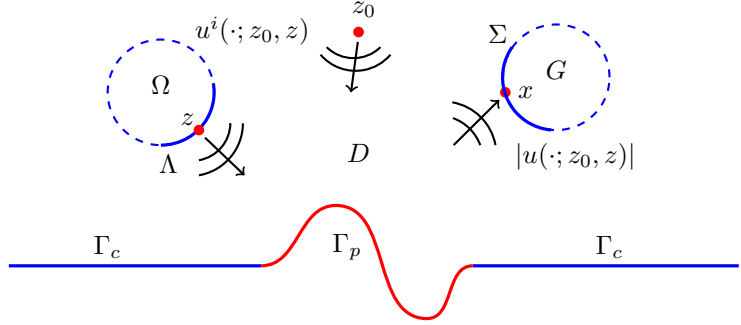

\subsection{Uniqueness results}

Before we present the uniqueness result on phaseless inverse scattering, the following reciprocity relation for the total fields is needed.

\begin{lemma}\label{Lemma1}
	Let $u^s(x, z)$ be the scattered field satisfying \eqref{eq:Helmholtz2}--\eqref{eq:Sommerfeld3}. Then we have
	\begin{eqnarray}\label{reciprocity}
	u(x, z)=u(z, x), \quad \forall x, z\in D,\ x\neq z.
	\end{eqnarray}
\end{lemma}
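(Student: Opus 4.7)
The approach is to apply Green's second identity (in the bilinear, non-conjugated form) to $u(\cdot,x)$ and $u(\cdot,z)$ on the regularized domain $V_{R,\epsilon}=(D\cap B_R)\setminus(\overline{B_\epsilon(x)}\cup\overline{B_\epsilon(z)})$, where $R$ is chosen so large that $\mathrm{supp}(f)\cup\{x,z\}\subset B_R$ and $\epsilon>0$ is chosen so small that the two small discs are disjoint and contained in $D\cap B_R$. Both fields are classical $C^2$ solutions of the Helmholtz equation on $\overline{V_{R,\epsilon}}$, so the volume term vanishes and the identity reduces to
\begin{equation*}
0=\int_{\partial V_{R,\epsilon}}\bigl[u(y,x)\,\partial_{\nu}u(y,z)-u(y,z)\,\partial_{\nu}u(y,x)\bigr]\,ds(y),
\end{equation*}
with $\nu$ the outward normal of $V_{R,\epsilon}$.

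I would then show that three of the four pieces of $\partial V_{R,\epsilon}$ contribute nothing in the limit. On $\Gamma\cap B_R$ the integrand vanishes pointwise: the Dirichlet and Neumann portions of $\mathscr{B}_c$ and $\mathscr{B}_p$ kill it trivially, while on $\Gamma_{p,_I}$ both total fields satisfy the same Robin condition, so expressing $\partial_{\nu}u(\cdot,x)$ and $\partial_{\nu}u(\cdot,z)$ via $u(\cdot,x)$ and $u(\cdot,z)$ produces a symmetric product $\lambda u(\cdot,x)u(\cdot,z)-\lambda u(\cdot,z)u(\cdot,x)=0$ (it is essential here that the identity is the non-Hermitian one, so that $\lambda$ rather than $\bar\lambda$ appears in both places, even when $\mathrm{Im}\,\lambda>0$). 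On $\partial B_R\cap D$ I would use that the point-source incident field $\tfrac{\mathrm{i}}{4}H_0^{(1)}(k|\cdot-w|)$ satisfies the Sommerfeld condition for fixed $w$, so both $u(\cdot,x)$ and $u(\cdot,z)$ do as well, and the rewriting $u_1\partial_r u_2-u_2\partial_r u_1=u_1(\partial_r u_2-\mathrm{i}ku_2)-u_2(\partial_r u_1-\mathrm{i}ku_1)$ together with the $O(R^{-1/2})\cdot o(R^{-1/2})$ decay estimates sends the arc integral to $0$ as $R\to\infty$.

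The two small-circle integrals produce the actual content. Splitting $u(\cdot,x)=u^i(\cdot,x)+u^s(\cdot,x)$, only the logarithmic Hankel singularity of $u^i(\cdot,x)$ survives the limit $\epsilon\to0$; using $H_0^{(1)}(kr)\sim\tfrac{2\mathrm{i}}{\pi}\log r$ and its derivative near $r=0$ in the standard way (the same jump calculation that underlies Green's representation formula), one obtains
\begin{equation*}
\lim_{\epsilon\to 0}\int_{\partial B_\epsilon(x)}\bigl[u(y,x)\,\partial_{\nu}u(y,z)-u(y,z)\,\partial_{\nu}u(y,x)\bigr]\,ds(y)=-u(x,z),
\end{equation*}
and symmetrically $+u(z,x)$ from $\partial B_\epsilon(z)$. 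Substituting back yields $u(x,z)=u(z,x)$, which is \eqref{reciprocity}. The main technical point, rather than a genuine obstacle, is the impedance-boundary cancellation on $\Gamma_{p,_I}$ combined with the verification that the radiation estimate holds uniformly on $\partial B_R\cap D$, since $D$ is an unbounded perturbed half-plane whose far-field arc meets $\Gamma_c$ at two endpoints rather than forming a closed curve.
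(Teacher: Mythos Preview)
Your proposal is correct and follows the standard Green's-identity route to reciprocity; the paper itself does not spell out a proof but simply defers to Theorem~3.1.4 of Lines' thesis \cite{Lines}, whose argument is precisely the one you outline. Your handling of the impedance portion $\Gamma_{p,_I}$ and of the half-arc $\partial B_R\cap D$ via the Sommerfeld condition on the full total field is accurate, so there is nothing to add.
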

\begin{proof}
	The proof of the reciprocity relation is similar to that of Theorem 3.1.4 in \cite{Lines}, so the details are omitted. 
\end{proof}

Let $\Gamma_j=\{x\in \mathbb{R}^2|\ x_2=f_j(x_1), x_1\in \mathbb{R} \}$ be the locally perturbed curve with the real-valued function $f_j\in C^2(\mathbb{R})$ having a compact support, $j=1,2$. Denote by $D_j=\{x\in \mathbb{R}^2|\ x_2>f_j(x_1), x_1\in \mathbb{R} \}$ the domain above $\Gamma_j$, $j=1,2$, and by $D_0=D_1\cap D_2$.  

Denote by $u_j^s$ and $u_j$ the scattered field and the total field generated by $\Gamma_j$, respectively, corresponding to the incident field $u^i(x, z)$, $j=1,2$. Now, the following theorem shows that Problem \ref{prob:half-plane} admits a unique solution, namely, the geometrical and physical information of the locally perturbed plane can be simultaneously and uniquely determined from the modulus of total near-fields.

\begin{theorem}\label{thm:half-plane}
	Let $\Gamma_1$ and $\Gamma_2 $ be two locally perturbed curves with boundary conditions $\mathscr{B}_{c,1}, \mathscr{B}_{p,1}$ and $\mathscr{B}_{c,2}, \mathscr{B}_{p,2}$, respectively. Assume that $\Lambda$ and $\Sigma$ are admissible curves with respect to $\Omega$ and $G$, respectively, such that $\overline{\Omega}\subset\subset D_0$, $\overline {G}\subset\subset D_0$ and $\overline{\Omega}\cap \overline {G}=\emptyset$. Suppose that the corresponding total near-fields satisfy that
	\begin{align}
	|u_1(x,z_0)|= & |u_2(x,z_0)|, \quad \forall x  \in \Sigma, \label{plane_condition1} \\
	|u_1(x,z)|= & |u_2(x,z)|, \quad \forall (x, z) \in \Sigma\times\Lambda \label{plane_condition2}
	\end{align}
	and
	\begin{equation}\label{plane_condition3}
	|u_1(x,z_0)+u_1(x,z)|=|u_2(x,z_0)+u_2(x,z)|,\quad \forall (x, z) \in \Sigma\times\Lambda
	\end{equation}
	for an arbitrarily fixed $z_0\in D_0  \backslash(\Lambda\cup\Sigma)$. Then we have  $\Gamma_1=\Gamma_2$, $\mathscr{B}_{c,1}=\mathscr{B}_{c,2}$ and $\mathscr{B}_{p,1}=\mathscr{B}_{p,2}$.
\end{theorem}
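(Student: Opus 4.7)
The plan is to transport the proof strategy of Theorem \ref{thm:bounded_scatterer} to the two-dimensional locally perturbed half-plane setting, using Lemma \ref{Lemma1} in place of the point-source reciprocity used in the bounded case. First, I would expand the modulus squared in \eqref{plane_condition3} and combine with \eqref{plane_condition1}--\eqref{plane_condition2} to get the cross-term identity
$$\mathrm{Re}\{u_1(x,z_0)\overline{u_1(x,z)}\}=\mathrm{Re}\{u_2(x,z_0)\overline{u_2(x,z)}\}, \quad \forall (x,z)\in\Sigma\times\Lambda.$$
Writing $u_j(x,z_0)=r(x,z_0)e^{i\alpha_j(x,z_0)}$ and $u_j(x,z)=s(x,z)e^{i\beta_j(x,z)}$ with common moduli, the analyticity of the total fields together with condition (iii) of Definition \ref{def:admissible_curve} supplies open sets $\Sigma_0\subset\Sigma$ and $\Lambda_0\subset\Lambda$ on which $rs$ does not vanish. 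Dividing by $rs$ yields $\cos(\alpha_1-\beta_1)=\cos(\alpha_2-\beta_2)$, which forces one of the alternatives $\alpha_1-\alpha_2=\beta_1-\beta_2+2m\pi$ or $\alpha_1+\alpha_2=\beta_1+\beta_2+2m\pi$ on $\Sigma_0\times\Lambda_0$.

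In the first alternative, set $\gamma(x):=\alpha_1(x,z_0)-\alpha_2(x,z_0)-2m\pi$ to obtain $u_1(x,z)=e^{i\gamma(x)}u_2(x,z)$ on $\Sigma_0\times\Lambda_0$. Applying Lemma \ref{Lemma1} gives $u_1(z,x)=e^{i\gamma(x)}u_2(z,x)$. For each fixed $x\in\Sigma_0$, analytic continuation in $z$ propagates the identity to all of $\partial\Omega$, whereupon the difference $w(z,x)=u_1(z,x)-e^{i\gamma(x)}u_2(z,x)$ solves the Helmholtz equation in $\Omega$ with vanishing Dirichlet trace; since $k^2$ is not a Dirichlet eigenvalue of $-\Delta$ in $\Omega$, $w\equiv 0$ in $\Omega$, and a further analytic continuation spreads the equality to $z\in D_0\setminus\{x\}$. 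The logarithmic singularity $u^i(z,x)=(i/4)H_0^{(1)}(k|z-x|)\sim(i/2\pi)\log|z-x|$ as $z\to x$, together with the local boundedness of $u_j^s(z,x)$, then forces $e^{i\gamma(x)}=1$ and yields $u_1^s(z,x)=u_2^s(z,x)$ for $(x,z)\in\Sigma_0\times (D_0\setminus\{x\})$.

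The second alternative must be ruled out by a 2D analogue of the far-field argument in Theorem \ref{thm:bounded_scatterer}. The same manipulations produce $u_1(z,x)=e^{i\eta(x)}\overline{u_2(z,x)}$ on $\Sigma_0\times\Lambda_0$; the decomposition $H_0^{(1)}(t)=J_0(t)+iY_0(t)$ shows that $u^i(z,x)-e^{i\eta(x)}\overline{u^i(z,x)}$ blows up like $\log|z-x|$ unless $e^{i\eta(x)}=1$, forcing $u_1(z,x)=\overline{u_2(z,x)}$. Multiplying by $\sqrt{\rho}\,e^{\mp ik\rho}$ and letting $\rho\to\infty$ along a ray $z=x+\rho\hat z$ with $\hat z$ in the upper half of the unit circle $\mathbb{S}$ (using the 2D Sommerfeld asymptotics of $u_j^s$ and $u^i$) yields $e^{2ik\rho}\to$ a finite nonzero quantity, a contradiction provided some far-field pattern $u_1^\infty(\hat z,x)$ is nontrivial; its nontriviality follows from Rellich's lemma and the unboundedness of $u^i(\cdot,x)$ at $z=x$, exactly as in the proof of Theorem \ref{thm:bounded_scatterer}.

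Once $u_1^s(x,z)=u_2^s(x,z)$ has been established for $x\in\Sigma_0$ and $z\in D_0\setminus\{x\}$, the analyticity of $u_j^s$ in $x$ promotes the identity to all $(x,z)\in D_0\times D_0$ with $x\ne z$. I would then invoke the reciprocity of Lemma \ref{Lemma1} and a standard uniqueness result for the phased near-field inverse scattering problem by a locally perturbed half-plane with mixed boundary conditions (the 2D analogue of \cite[Thm.~5.6]{CK13}, available through the integral-equation framework of \cite{ZhangDeyue2008, BL2011, ZHW2013, Qu}) to conclude $\Gamma_1=\Gamma_2$, $\mathscr{B}_{c,1}=\mathscr{B}_{c,2}$ and $\mathscr{B}_{p,1}=\mathscr{B}_{p,2}$. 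The main obstacle I anticipate is the conjugate alternative: in the half-plane geometry the far-field exists only over a half-range of directions and one must check that the Rellich/radiation argument can still be run to deliver the necessary $e^{2ik\rho}$-contradiction; the appeal at the end to a phased-data uniqueness theorem in this mixed-boundary locally rough setting is the other spot where care is required.
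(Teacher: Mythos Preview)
Your proposal follows essentially the same route as the paper's proof: the cross-term identity, the polar decomposition, the two cosine alternatives, reciprocity via Lemma~\ref{Lemma1}, the Dirichlet-eigenvalue trick on $\Omega$, and the singularity argument at $z\to x$ to kill the phase factor are all handled identically, and your treatment of the conjugate alternative (force $e^{i\eta(x)}=1$ via the $Y_0$-singularity, then run the $e^{2ik\rho}$ far-field contradiction) is exactly what the paper does by pointing back to Theorem~\ref{thm:bounded_scatterer}.

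Two places where you diverge slightly from the paper are worth flagging. First, your passage ``analyticity of $u_j^s$ in $x$ promotes the identity to all $(x,z)\in D_0\times D_0$'' is not quite right as stated: vanishing on the open arc $\Sigma_0$ does not by itself force a Helmholtz solution to vanish in a two-dimensional region. The paper instead repeats the eigenvalue argument on $G$ (extend from $\Sigma_0$ to $\partial G$ by analyticity along the analytic curve, then use that $k^2$ is not a Dirichlet eigenvalue in $G$, then continue analytically into $D_0$); since you already used this trick on $\Omega$ this is an easy fix. Second, for the endgame the paper does \emph{not} invoke an external phased-data uniqueness theorem for locally rough surfaces: it gives a direct self-contained argument. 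Assuming $\Gamma_1\ne\Gamma_2$, one picks $x^*\in\Gamma_1\cap D_2$, sends point sources $z_n=x^*-\tfrac{1}{n}\nu(x^*)\to x^*$, and uses reciprocity plus \eqref{coincide2} to compare $\mathscr{B}_{p,1}u_2^s(x^*,z_n)$ (bounded, since $x^*\in D_2$) with $\mathscr{B}_{p,1}u_1^s(x^*,z_n)=-\mathscr{B}_{p,1}u^i(x^*,z_n)\to\infty$, yielding the contradiction; the boundary operators are then identified as in \cite[Thm.~5.6]{CK13}. This replaces your appeal to the literature with a short explicit step and sidesteps the concern you raised about whether such a result is available in the mixed-boundary locally rough setting.
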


\begin{proof}
	In terms of \eqref{plane_condition1}, \eqref{plane_condition2} and \eqref{plane_condition3}, we have for all $x\in\Sigma, z\in\Lambda$
	\begin{equation}\label{Thm2equality1}
	\mathrm{Re}\left\{u_1(x,z_0) \overline{u_1(x,z)}\right\}
	=\mathrm{Re}\left\{u_2(x,z_0) \overline{u_2(x,z)}\right\}.
	\end{equation}
	Using \eqref{plane_condition1} and \eqref{plane_condition2}, we denote
	\begin{equation*}
	u_j(x,z_0)=r(x,z_0) \mathrm{e}^{\mathrm{i} \alpha_j(x,z_0)},\quad
	u_j(x,z)=s(x,z) \mathrm{e}^{\mathrm{i} \beta_j(x,z)},\quad j=1,2,
	\end{equation*}
	where $r(x,z_0)=|u_j(x,z_0)|$, $s(x,z)=|u_j(x,z)|$, $\alpha_j(x, z_0)$ and $\beta_j(x, z)$, are real-valued functions,  $j=1,2$.
	
	Due to the fact that $\Sigma$ is an admissible curve of $G$, Definition \ref{def:admissible_curve} and the analyticity of $u_j(x,z)$ with respect to $x$ imply that $s(x,z)\not\equiv 0$
	for $x \in \Sigma, z \in\Lambda$. Moreover, by the continuity we deduce that there exists open sets $\tilde{\Sigma}\subset \Sigma$ and $\Lambda_0\subset\Lambda$
	such that $s(x, z)\neq 0$, $\forall (x,z)\in \tilde{\Sigma}\times\Lambda_0$. Analogously, we obtain $r(x,z_0)\not\equiv 0$ on $\tilde{\Sigma}$. The continuity also leads to $r(x, z_0)\neq 0$ on an open set $\Sigma_0 \subset \tilde{\Sigma}$. Hence, we have $r(x, z_0)\neq 0$, $s(x,z)\neq 0,\ \forall (x, z) \in \Sigma_0\times\Lambda_0$. Furthermore, we derive from \eqref{Thm2equality1} that
	\begin{equation*}
	\cos[\alpha_1(x,z_0)-\beta_{1}(x,z)]=\cos[\alpha_2(x,z_0)-\beta_{2}(x,z)], \quad \forall (x, z) \in \Sigma_0\times\Lambda_0.
	\end{equation*}
	Therefore, either
	\begin{eqnarray}\label{Thm2equality2}
	\alpha_1(x,z_0)-\alpha_2(x,z_0)=\beta_{1}(x,z)-\beta_{2}(x,z)+ 2m\pi, \;\forall (x, z) \in \Sigma_0\times\Lambda_0
	\end{eqnarray}
	or
	\begin{eqnarray}\label{Thm2equality3}
	\alpha_1(x,z_0)+\alpha_2(x,z_0)=\beta_1(x,z)+\beta_{2}(x,z)+ 2m\pi, \; \forall (x, z) \in \Sigma_0\times\Lambda_0
	\end{eqnarray}
	holds with some $m \in \mathbb{Z}$.
	
	We first deal with the case \eqref{Thm2equality2}. Note that $z_0$ is fixed, we can define $\gamma(x):=\alpha_1(x, z_0)-\alpha_2(x, z_0)- 2m\pi$
	for $ x \in \Sigma_0$, thus we deduce for all $ (x, z) \in \Sigma_0\times\Lambda_0$
	\begin{equation*}
	u_1(x, z)=s(x, z)\mathrm{e}^{\mathrm{i} \beta_1(x,z)}
	=s(x,z)\mathrm{e}^{\mathrm{i} \beta_{2}(x,z)+\mathrm{i} \gamma(x)}=u_2(x,z)\mathrm{e}^{\mathrm{i} \gamma(x)}.
	\end{equation*}
	By the reciprocity relation \eqref{reciprocity}, we arrive at
	\begin{equation*}
	u_1(z,x)= \mathrm{e}^{\mathrm{i} \gamma(x)}u_2(z,x), \quad \forall (x, z) \in \Sigma_0\times\Lambda_0.
	\end{equation*}
	Now, for every $x\in \Sigma_0$, the analyticity of $u_j(z, x)$($j=1,2$) with respect to $z$ leads to $u_1(z, x)= \mathrm{e}^{\mathrm{i} \gamma(x)}u_2(z, x),$   $\forall z\in \partial\Omega$.
	Define $w(z, x)=u_1(z, x)-\mathrm{e}^{\mathrm{i} \gamma(x)}  u_2(z,x)$, then
	\begin{equation}\label{eigenvalue}
	\left\{
	\begin{array}{lr}
	\Delta w+ k^2 w=0 &\mathrm{in}\ \Omega, \\
	w=0  &\mathrm{on}\ \partial \Omega.
	\end{array}
	\right.
	\end{equation}
	Since $k^2$ is not a Dirichlet eigenvalue of $-\Delta$ in $\Omega$, \eqref{eigenvalue} implies that $w=0$ in $\Omega$. So, the analyticity of $u_j(z,x)(j=1,2)$ with respect to $z$ yields
	\begin{equation*}
	u_1(z, x)= \mathrm{e}^{\mathrm{i} \gamma(x)}u_2(z,x),\quad \forall z \in D_0\backslash \{x\}.
	\end{equation*}
	namely, for all $ z \in D_0\backslash \{x\}$,
	\begin{equation}\label{eq:relation2}
	u^s_1(z, x)+u^i(z, x)=\mathrm{e}^{\mathrm{i} \gamma(x)}[u^s_2(z, x)+u^i(z, x)].
	\end{equation}
	From the boundedness of $u^s_j(z,x)$ for $z\in D_j$ $(j=1,2)$  and \eqref{eq:relation}, we see $(1-\mathrm{e}^{\mathrm{i} \gamma(x)})u^i(z, x)$ is bounded for $z \in D_0\backslash \{x\}$. Therefore, by letting $z\rightarrow x$, we find $\mathrm{e}^{\mathrm{i} \gamma(x)} = 1$. Furthermore, the continuity of the scattered fields and the reciprocity relation \eqref{reciprocity} lead to
	\begin{equation*}
	u_{1}^s(x, z) =  u_{2}^s(x, z), \quad  x\in \Sigma_0, z \in D_0.
	\end{equation*}
	By a similar argument of \eqref{eigenvalue} for $\varpi(x, z)=u_1^s(x, z)- u_2^s(x, z)$ on $G$, we have
	\begin{equation}\label{coincide2}
	u_{1}^s(x, z) =  u_{2}^s(x, z),\quad   \forall x, z \in D_0.
	\end{equation}
	
	Next we shall show that the case \eqref{Thm2equality3} does not hold. Suppose that \eqref{Thm2equality3} is true, then following a similar argument, we find that for every $x\in \Sigma_0$, there exists $\eta(x)$ such that $u_1(z,x)=\mathrm{e}^{\mathrm{i} \eta(x)}\overline{u_2(z,x)}$ for all $z \in D_0\backslash  \{x\}$, that means
	\begin{equation*}
	u_1^s(z,x)+u^i(z, x)=\mathrm{e}^{\mathrm{i} \eta(x)}  \overline{[u_{2}^s(z,x) +u^i(z,x)]}.
	\end{equation*}
	Then, it can be seen from the boundedness of $u_{j}^s(z,x)$ that $u^i(z,x)-\mathrm{e}^{\mathrm{i} \eta(x)} \overline{u^i(z,x)}$ is bounded for all $z \in D_0\backslash  \{x\}$. By letting $z\to x$ in
	\begin{equation*}
		u^i(z, x)-\mathrm{e}^{\mathrm{i} \eta(x)} \overline{u^i(z,x)}
		= \frac{\mathrm{e}^{\mathrm{i} \eta(x)}-1}{4}Y_0(k|x-z|) +\mathrm{i}\frac{\mathrm{e}^{\mathrm{i} \eta(x)}+1}{4}J_0(k|x-z|),
	\end{equation*}
	we find that $\mathrm{e}^{\mathrm{i} \eta(x)}=1$, which implies $u_1(z, x)= \overline{u_2(z, x)}$ for $z \in D_0\backslash \{x\}$. Now, following the same ideas in Theorem \ref{thm:bounded_scatterer}, we deduce that the case \eqref{Thm2equality3} does not hold.
	
	Once the case \eqref{coincide2} is verified, we would conclude that $\Gamma_1=\Gamma_2$. Otherwise, assume that $\Gamma_1\neq\Gamma_2$. Then, without loss of generality, there exists $x^* \in \partial D_0$ such that $x^* \in \Gamma_1$ and $x^* \in D_2$. Define
	\begin{equation*}
	z_n:=x^*-\frac{1}{n} \nu(x^*),  \quad n=1,2,...
	\end{equation*}
	such that $z_n\in D_0$ for sufficiently large $n$.  Then, from the reciprocity relation and the smoothness of $u_2^s(x^*,z)$ in $D_2$, we have
	\begin{equation*}
	\lim_{n\to\infty}  B_{p,1} u_2^s(x^*,z_n) =  \lim_{n\to\infty}  B_{p,1} u_2^s(z_n, x^*) = B_{p,1}u_2^s(x^*,x^*).
	\end{equation*}
	On the other hand, the boundary conditions \eqref{eq:boundary_condition2} and \eqref{coincide2} imply that
	\begin{equation*}
	\lim_{n\to\infty}  B_{p,1} u_2^s(x^*,z_n) =\lim_{n\rightarrow \infty}B_{p,1} u_1^s(x^*,z_n)=-\lim_{n\to\infty}B_{p,1} u^i(x^*,z_n)=\infty,
	\end{equation*}
	which is a contradiction. Therefore $\Gamma_1=\Gamma_2$.  Further, similar to the proof of Theorem 5.6 in \cite{CK13}, we obtain $\mathscr{B}_{c,1}=\mathscr{B}_{c,2}$ and $\mathscr{B}_{p,1}=\mathscr{B}_{p,2}$.
\end{proof}

\begin{remark}
We want to point out that an analogous uniqueness result in three dimensions remains valid subject to some modifications of the fundamental solution and the admissible curve. 
\end{remark}


\section*{Acknowledgements}

D. Zhang and F. Sun were supported by NSFC grant 11671170 and Y. Guo was supported by NSFC grants 11601107, 41474102 and 11671111. The work of H. Liu was supported by the FRG and startup grants from Hong Kong Baptist University, Hong
123 Kong RGC General Research Funds, 12302415 and 12302017.




\end{document}